\documentclass{amsart}

\usepackage{amsmath}%
\usepackage{amsfonts}%
\usepackage{amssymb}%
\usepackage{amsthm}
\usepackage{graphicx}
\usepackage{dsfont}
\usepackage{pdfcomment}
\usepackage{tikz}
\usepackage{tikz-3dplot}
\usepackage{subfigure}
\usepackage{soul}
\usepackage{enumerate}
\usepackage{cancel}

\usepackage{hyperref}
\usepackage{cleveref}
\usepackage{esvect}

\usetikzlibrary{decorations.markings}
\newtheorem{theorem}{Theorem}[section]

\newtheorem{definition}[theorem]{Definition}
\newtheorem{example}[theorem]{Example}
\allowdisplaybreaks

\newtheorem{proposition}[theorem]{Proposition}
\newtheorem{remark}[theorem]{Remark}

\DeclareMathOperator{\Ad}{Ad}

\DeclareMathOperator{\tr}{tr}

\DeclareMathOperator{\Teich}{Teich}

\title{Discrete harmonic maps between hyperbolic surfaces}
\author{Wai Yeung Lam}
\thanks{This work was partially supported by the FNR grant CoSH O20/14766753.}
\date{\today}

\address{Department of Mathematics, University of Luxembourg, Maison du nombre, 6 avenue de la Fonte, L-4364 Esch-sur-Alzette, Luxembourg.} \email{wyeunglam@gmail.com}

\begin{document}
	
		\begin{abstract}		
		Given a topological cell decomposition of a closed surface equipped with edge weights, we consider the Dirichlet energy of any geodesic realization of the 1-skeleton graph to a hyperbolic surface. By minimizing the energy over all possible hyperbolic structures and over all realizations within a fixed homotopy class, one obtains a discrete harmonic map into an optimal hyperbolic surface. We characterize the extremum by showing that at the optimal hyperbolic structure, the discrete harmonic map and the edge weights are induced from a weighted Delaunay decomposition. 
	\end{abstract}

	\maketitle

	\section{Introduction}
	
	We consider closed orientable surfaces $S$ of genus $g>1$. These surfaces support various conformal structures. By the uniformization theorem, each conformal structure can be represented by a unique hyperbolic structure. Given two different hyperbolic structures $h$ and $\tilde{h}$ on a surface, it is interesting to consider a harmonic map $f:(S,h) \to (S,\tilde{h})$, which is a homeomorphism that minimizes the Dirichlet energy
	\[
	D_h(f) = \iint ||df||^2_{\tilde{h}} \, dA_{h}
	\]
	among all immersions that are isotopic to the identity. It is known that for any fixed hyperbolic metric $(S,\tilde{h})$ at the target, the harmonic map exists uniquely.

	On can further consider the Dirichlet energy of harmonic maps by varying the targeted hyperbolic metric $\tilde{h}$ and hence regard $D_h$ as a function over the Teichm\"{u}ller space consisting of the marked hyperbolic metrics. It is a classical result that the unique minimizer of the energy takes place when the targeted hyperbolic metric coincides with the one at the source $\tilde{h}=h$ and the harmonic map becomes the identity map. In this way, one could interpret that the hyperbolic metric at the source $h$ coincides with the unique minimizer of the energy function $D_h$ over the Teichm\"{u}ller space. 
	
	On the other hand, structures on surfaces are often described in terms of combinatorial data and it is interesting to find homeomorphisms of surfaces that preserve the combinatorial structures as well. Here the combinatorial data is a cell decomposition $(V,E,F)$ of a topological surface $S$, where respectively $V,E$ and $F$ denote the set of vertices, edges and faces. Analogous to the classical harmonic map, given a hyperbolic surface $(S,\tilde{h})$, we look for an embedding of the 1-skeleton graph $(V,E)$ to the surface that minimizes the distortion energy. Fixing an arbitrary choice of positive edge weights $c:E \to \mathbb{R}_{>0}$, the Dirichlet energy is defined as
	\[
	D_c(f) = \frac{1}{2}\sum_{ij} c_{ij} \ell_{ij}^2
	\]  
	for any geodesic realization $f:(V,E) \to (S,\tilde{h})$, where edges $ij$ are realized as geodesics with hyperbolic length $\ell_{ij}$. Colin de Verdière \cite{CdV1991} proved that for a fixed hyperbolic metric, there exists a unique minimizer of the energy in the given homotopy class, which is called a discrete harmonic map. He showed that the 1-skeleton graph defines a geodesic cell decomposition of the hyperbolic surface. 
	One can further consider the Dirichlet energy of discrete harmonic maps by varying the targeted hyperbolic metric and regard $D_c$ as a function over the Teichm\"{u}ller space. Kajigaya and Tanaka \cite{Toru2021} proved that there is a unique minimizer over the Teichm\"{u}ller space, which depends on the choice of edge weights $c$. It remains mysterious how the optimal hyperbolic metric relates to the edge weights and few explicit examples of the minimizers are known.
	
	We show that the extremum corresponds exactly to a weighted Delaunay decomposition (See Definition \ref{def:wdelaunay}). Analogous results for Euclidean tori (genus g=1) was proved in \cite{Lam2022}.
	
	\begin{theorem}\label{thm:delaunay}
		Suppose $\mathcal{T}=(V,E,F)$ is a cell decomposition of a topological surface $S$ of genus larger than $1$ equipped with positive edge weights $c:E \to \mathbb{R}_{>0}$. Then at the optimal hyperbolic structure minimizing the Dirichlet energy $D_c$ over the Teichm\"{u}ller space, the discrete harmonic map and the edge weights are induced from a weighted Delaunay decomposition. 
		
		Conversely, every weighted Delaunay decomposition is the discrete harmonic map at the optimum with respect to the induced edge weights.
	\end{theorem}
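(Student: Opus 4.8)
The plan is to treat the two minimizations together as a single variational problem. Let $\mathcal{M}$ be the moduli space of pairs $(\tilde h,f)$ consisting of a marked hyperbolic structure together with a geodesic realization of $(V,E)$ in the fixed homotopy class; identifying $f$ with the positions of the vertices, $\mathcal{M}$ is an open subset of the Teichm\"{u}ller space of genus $g$ with $|V|$ marked points, of dimension $6g-6+2|V|$, and $D_c$ is a smooth function on it whose infimum, by \cite{CdV1991} and \cite{Toru2021}, is attained at a unique point. That point is a critical point of $D_c$ on $\mathcal{M}$, and I would split the first-order condition accordingly. Along deformations moving the vertices with $\tilde h$ fixed one recovers exactly Colin de Verdi\`ere's discrete harmonic map equation, namely the balancing condition $\sum_{j\sim i} c_{ij}\,\ell_{ij}\,T_{ij}=0$ at every vertex $i$, where $T_{ij}\in T_{f(i)}(S,\tilde h)$ is the unit tangent of the geodesic from $f(i)$ to $f(j)$; indeed the gradient of $\ell_{ij}^2$ with respect to the position of $i$ is $-2\ell_{ij}T_{ij}$. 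The remaining directions — those that genuinely vary $\tilde h$ — give the new equations, which should encode the weighted Delaunay property much as the vanishing of the Hopf differential does in the smooth theory and in the torus case \cite{Lam2022}.

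To write those equations down I would refine $\mathcal{T}$ to a geodesic triangulation $\widehat{\mathcal{T}}$ by inserting diagonals carrying weight $0$, so that a point of $\mathcal{M}$ is coordinatized by the edge lengths $\{\ell_e\}$ of $\widehat{\mathcal{T}}$ subject to the constraints $\Theta_v=2\pi$, where $\Theta_v$ is the total angle around $v$. Introducing Lagrange multipliers $\mu_v$ and setting $L=\tfrac12\sum_{e\in E}c_e\ell_e^2+\sum_v\mu_v(\Theta_v-2\pi)$, the Euler--Lagrange equations read $c_e\ell_e+\sum_v\mu_v\,\partial_{\ell_e}\Theta_v=0$ for $e\in E$ and $\sum_v\mu_v\,\partial_{\ell_e}\Theta_v=0$ for the auxiliary diagonals. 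The computational input is the explicit form of $\partial\theta/\partial\ell$ in a hyperbolic triangle — a short consequence of the laws of cosines and sines, which I would isolate as a lemma (for instance $\partial\theta_k/\partial\ell_{ij}=(\sinh\ell_{ik}\sin\theta_i)^{-1}$ for the opposite angle and $\partial\theta_i/\partial\ell_{ij}=-\cos\theta_j\,(\sinh\ell_{ik}\sin\theta_k)^{-1}$ for an adjacent one). Substituting and regrouping edge by edge, the system should collapse to one relation per edge tying $c_e$ to the lengths and angles of the two incident faces and to the multipliers at the four surrounding vertices; equivalently one can differentiate $D_c$ directly along the character variety, but the triangulated picture keeps everything elementary.

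The heart of the argument is to recognize the collapsed system as the weighted Delaunay condition. I would pass to the hyperboloid model $\mathbb{H}^2\subset\mathbb{R}^{2,1}$, lift $f$ to a $\Gamma$-equivariant map $\hat f\colon\widetilde V\to\mathbb{H}^2$ with $\Gamma=\pi_1(S)$ acting through the holonomy of $\tilde h$, and radially rescale each vertex, setting $q_v:=\lambda_v\,\hat f(v)$ with $\lambda_v>0$ a monotone function of $\mu_v$. The claim is that the Euler--Lagrange equations hold precisely when the $q_v$ span a $\Gamma$-invariant convex polyhedral surface in $\mathbb{R}^{2,1}$ whose faces project radially onto the faces of $f^*(\mathcal{T})$: the homogeneous (diagonal) equations force the lift to be flat across the auxiliary diagonals, so that its genuine faces are exactly the cells of $\mathcal{T}$; the inhomogeneous equations identify $c_e$ with the induced edge weight of Definition~\ref{def:wdelaunay}; and the positivity $c_e>0$ makes the bends along the edges convex rather than concave, i.e.\ Delaunay rather than anti-Delaunay. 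By the characterization of weighted Delaunay decompositions via such convex lifts, this says precisely that $f^*(\mathcal{T})$ together with $c$ is induced from a weighted Delaunay decomposition.

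For the converse I would run the computation in reverse. A weighted Delaunay decomposition $\mathcal{D}$ of a hyperbolic surface comes, by definition, with weights $r$, induced edge weights $c$, and a convex polyhedral lift in $\mathbb{R}^{2,1}$; reading the trigonometric identities backwards shows that $\mathcal{D}$ satisfies both the balancing condition and the Euler--Lagrange equations in the $\tilde h$-directions, hence is a critical point of $D_c$ on $\mathcal{M}$, and by the uniqueness and convexity of the minimizer established in \cite{Toru2021} it is the global minimum. \textbf{The main obstacle} is the middle step: coaxing the hyperbolic-trigonometric Euler--Lagrange equations into the convex-lift form, and in particular proving that the resulting polyhedral surface is convex rather than merely polyhedral — this is where $c>0$, and possibly the second-order minimality condition, must be used. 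A secondary point requiring care is that faces are polygons: one must verify that the homogeneous equations really do make the lift flat across every inserted diagonal, so that the weighted Delaunay decomposition and its induced weights do not depend on the choice of triangulation $\widehat{\mathcal{T}}$.
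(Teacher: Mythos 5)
Your vertex-variation half is fine and reproduces Proposition \ref{prop:dualsurface}(a)$\leftrightarrow$(b), but the step you yourself flag as ``the main obstacle'' is not a technical loose end --- it is the entire content of the theorem, and your proposal does not actually contain an argument for it. Concretely, two things are missing. (i) You never establish the asserted dictionary between the Lagrange multipliers $\mu_v$ of the angle constraints $\Theta_v=2\pi$ and radial scale factors $\lambda_v$ such that $q_v=\lambda_v\hat f(v)$ is a polyhedral surface that is flat across the inserted diagonals and has the cells of $\mathcal{T}$ as faces; this is precisely the translation of the Teichm\"uller-direction Euler--Lagrange equations into the convex-lift form and is nowhere carried out. (ii) Even granting (i), convexity and, crucially, \emph{positivity} of the vertex weights $\delta_v$ (equivalently $\langle f^{\dagger}_{\phi},f_i\rangle<0$ in Definition \ref{def:wdelaunay}) do not follow from ``$c>0$ and possibly second-order minimality'' by local trigonometric manipulations. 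In the paper these come from a different, global mechanism: at any discrete harmonic map the Maxwell--Cremona dual surface $f^{\dagger}$ of Proposition \ref{prop:dualsurface}(d) has convex spacelike faces because $c>0$ and the harmonic map is an embedding \cite{CdV1991}; at the optimum $f^{\dagger}$ is in addition $\rho$-equivariant, hence asymptotic to the null cone and contained in the future cone, and only this containment yields $\delta>0$. Your local Euler--Lagrange bookkeeping has no substitute for that global step.

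Note also how differently the criticality condition in the Teichm\"uller directions is handled: the paper never uses length coordinates or angle constraints, but shows $\frac{d}{dt}D_c=\omega_G(\sigma,\tau)$, where $\tau\in H^1_{\Ad\rho}(\pi_1(S),so(2,1))$ is the cocycle measuring the failure of $f^{\dagger}$ to be $\rho$-equivariant (Proposition \ref{prop:dualsurcocycle}), and then invokes nondegeneracy of the Weil--Petersson form, so that criticality is \emph{equivalent} to $[\tau]=0$, i.e.\ to an equivariant choice of $f^{\dagger}$. If you insist on your coordinate route (which is Izmestiev-style and could plausibly be pushed through), you additionally owe: a proof that the edge lengths of $\widehat{\mathcal{T}}$ subject to $\Theta_v=2\pi$ are genuine local coordinates on $\mathcal{M}$ near the minimizer (submersivity of $\ell\mapsto(\Theta_v)_v$, so the multiplier formulation is legitimate), and a proof that the outcome is independent of the auxiliary triangulation. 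Finally, your converse inherits the same gap, since ``reading the identities backwards'' presupposes the unproven equivalence of (i)--(ii) with criticality; once that equivalence is in place, the passage from ``critical'' to ``optimal'' does indeed rest on the uniqueness statement of \cite{Toru2021}, as in the paper.
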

 
    Given a Delaunay decomposition of a hyperbolic surface $(S,h,\mathcal{T})$, it was unclear how to pick a choice of edge weights adapting to the geometry. With the above result, we have a canonical choice (see Remark \ref{rmk:deweight} for notations)
    \begin{align}\label{eq:canon}
    	  c_{ij}:=\left(\cot( \frac{\pi - \alpha_{jk}^i - \alpha_{ki}^j + \alpha_{ij}^k}{2}) + \cot( \frac{\pi - \alpha_{lj}^i - \alpha_{il}^j + \alpha_{ji}^l}{2})\right)\frac{\tanh\frac{\ell_{ij}}{2}}{\ell_{ij}} 
    \end{align}
    which has the property that
    \begin{enumerate}[(i)]
    	\item the Dirichlet energy $D_c$ of discrete harmonic maps attains the unique minimizer over the Teichm\"{u}ller space when the targeted hyperbolic metric $\tilde{h}$ coincides with the one at the source, i.e. $\tilde{h}=h$. In this case, the discrete harmonic map is the identity map.
    		\item the discrete harmonic map $f:(V,E)\to (S,\tilde{h})$ now is not only defined topologically on the 1-skeleton graph, but can be extended to faces canonically using the hyperbolic metric $h$ at the source, which yields a map $f: (S,h,\mathcal{T}) \to (S,\tilde{h})$;
    \end{enumerate}
    The edge weight in Equation \eqref{eq:canon} also showed up in the study of discrete hyperbolic Laplacian, which is related to discrete conformal deformations in the sense of Thurston's circle patterns \cite{Lam2024,Leibon2002}. Theorem \ref{thm:delaunay} implies that we have a bijection between weighted Delaunay decompositions and positive edge weights. Thus, the associated discrete Laplacian captures the geometric information of the surface and it would be interesting to study how the geometry is related to the spectrum of the graph Laplacian.
    
     Theorem \ref{thm:delaunay} holds generally for Dirichlet energy in the forms
    \[
    \tilde{D}(\ell) = \sum_{ij} w_{ij}(\ell_{ij})
    \]
    where for each edge $ij\in E$, $w_{ij}:\mathbb{R} \to \mathbb{R}$ is a smooth increasing function (See Section \ref{sec:genen}). The discrete harmonic maps under consideration play a fundamental case in the study of generalized harmonic maps from a simplicial complex to metric spaces of non-positive curvature \cite{Wang2000}.
    
  The proof of the theorem involves the symplectic gradient of $D_c$ over the Teichm\"{u}ller space with respect to the Weil-Petersson symplectic form. It is represented as an element in the group cohomology $H^1_{\Ad \rho}(\pi_1(S),so(2,1))$, following the work of Mess \cite{Andersson2007,Mess2007} that relates the Teichm\"{u}ller space to Minkowski space.
    
    \section{Minkowski space and the Teichm\"{u}ller space}\label{sec:ttspace}
    
    We identify the Teichm\"{u}ller space $\Teich(S)$ as the space of marked hyperbolic metrics on $S$. Every hyperbolic surface is obtained via a quotient of the hyperbolic plane $\mathbb{H}^2$ by a discrete faithful representation of the fundamental group into the isometry group of the hyperbolic plane $\text{Iso}(\mathbb{H}^2)$. In order to study over the Teichm\"{u}ller space, we consider the hyperboloid model of the hyperbolic plane in Minkowski space \cite{Andersson2007,Mess2007}.
    
    \subsection{Hyperboloid in Minkowski space} The Minkowski space $\mathbb{R}^{2,1}$ is a three-dimensional real vector space equipped with the Minkowski inner product
    \[
    \langle (x_1,x_2,x_3) ,(y_1,y_2,y_3)  \rangle =x_1y_1+x_2y_2-x_3 y_3
    \]
    for $(x_1,x_2,x_3) ,(y_1,y_2,y_3)  \in \mathbb{R}^{2,1}$. A non-zero vector $x \in \mathbb{R}^{2,1}$ is space-like if $\langle x, x \rangle >0$, light-like if $\langle x, x \rangle=0$ and time-like if $\langle x, x \rangle <0$. If $x$ is space-like, we write the norm
    \[
    \| x \| :=\sqrt{\langle x, x \rangle}.
    \]
    The Minkowski cross product is defined as
    \begin{align*}
    	(x_1,x_2,x_3)  \times  (y_1,y_2,y_3)  :=
    	(x_2 y_3 -y_2 x_3) {\bf e_1} + (x_3 y_1 - x_1 y_3) {\bf e_2} -(x_1 y_2- x_2 y_1) {\bf e_0} ,
    \end{align*}
    where ${\bf e_1}:=(1,0,0) $,  ${\bf e_2}:=(0,1,0) $, ${\bf e_0}:=(0,0,1) $. For any two vectors $x, y \in \mathbb{R}^{2,1}$, $x \times y$ is perpendicular to both $x$ and $y$ with respect to the Minkowski inner product. We consider the hyperboloid model of the hyperbolic plane
    \[
    \mathbb{H}^2:=\{ x=(x_1,x_2,x_3) \in \mathbb{R}^{2,1} |  x_3>0 \text{ and } \langle x, x \rangle =-1\}.
    \] 
    The group of orientation-preserving isometries $\text{Iso}(\mathbb{H}^2)$ are identified as $SO^+(2,1)$, which is the connected component of $SO(2,1)$ containing the identity.
    
    \subsection{Tangent space of the Teichm\"{u}ller space as group cohomology}\label{sec:tanteich} Given a closed hyperbolic surface $(S,h)$, there is a developing map of the universal cover to the hyperboloid $f:\tilde{S} \to \mathbb{H}^2 \subset \mathbb{R}^{2,1}$ and it defines a holonomy representation of the fundamental group
    \[
    \rho \in \mbox{Hom}(\pi_1 (S), SO^{+}(2,1))
    \]
    satisfying for $\gamma \in \pi_1(S)$
    \begin{equation}\label{eq:fequi}
    	    f\circ \gamma = \rho_{\gamma}(f).
    \end{equation}    
    The holonomy representation is unique up to conjugation and particularly satisfies for $\gamma_1,\gamma_2 \in \pi_1(S)$ 
    \begin{equation}\label{eq:holrep}
    	  \rho_{\gamma_1 \gamma_2} = \rho_{\gamma_1} \rho_{\gamma_2}.
    \end{equation}
    Writing $ \mbox{Hom}^{*}(\pi_1 (S), SO^{+}(2,1))$ the set of discrete faithful representations, it is a classical result that
    \[
    \Teich(S) \cong \mbox{Hom}^{*}(\pi_1 (S), SO^{+}(2,1))/ \sim
    \]
    where $\rho \sim \tilde{\rho}$ if they are conjugate, i.e. there exists a constant $g\in SO^{+}(2,1)$ such that for $\gamma \in \pi_1$
    \[
    \tilde{\rho}_{\gamma} = g \rho_{\gamma} g^{-1}.
    \]
    In this way, by a result of Goldman \cite{Goldman1984}, the tangent space of the Teichm\"{u}ller space is identified with the group cohomology $H^1_{\Ad\rho}(\pi_1(S),so(2,1))$. Precisely, when one has a 1-parameter family of hyperbolic metrics with holonomy $\rho^{(t)}$ satisfying $ \rho= \rho^{(t)}|_{t=0}$ and  $\dot{\rho}:= \frac{d}{dt}\rho^{(t)} |_{t=0}$, Equation \eqref{eq:holrep} implies that the mapping \[ \sigma:= \dot{\rho} \rho^{-1} : \pi_1(S) \to so(2,1)\]  satisfies a \textit{cocycle condition}: for every $\gamma_1,\gamma_2 \in \pi_1(S)$
    \begin{align}\label{eq:cocycle}
    	 \sigma_{\gamma_1 \gamma_2}=  \sigma_{\gamma_1} + \Ad \rho_{\gamma_1} (\sigma_{\gamma_2})
    \end{align}
    where 
    \[
     \Ad \rho_{\gamma_1} (\sigma_{\gamma_2})  :=  \rho_{\gamma_1} \sigma_{\gamma_2} \rho^{-1}_{\gamma_1}.
    \]
    We write $Z^{1}_{\Ad(\rho)}(\pi_1(S),so(2,1))$ the space of cocycles, which are functions $\sigma: \pi_1(S) \to so(2,1)$ satisfying the cocycle condition (Equation \eqref{eq:cocycle}). This space contains a subspace $B^{1}_{\Ad(\rho)}(\pi_1(S),so(2,1))$ of \textit{coboundaries}. A coboundary is a function $\sigma: \pi_1(S) \to so(2,1)$ such that there exists a constant  $\sigma_0 \in so(2,1)$ satisfying for $\gamma \in \pi_1(S)$
    \[
    \sigma_\gamma =\sigma_0 - \Ad \rho_{\gamma} (\sigma_0). 
    \]
   It implies that $\dim B^{1}_{\Ad(\rho)}(\pi_1(S),so(2,1))=3$. The coboundaries correspond to the trivial change of holonomy induced from conjugation. Indeed, suppose $g^{(t)} \in SO(2,1)$ is a path in $SO(2,1)$ with $\mbox{Id}= g^{(t)}|_{t=0}$ and $\sigma_0= \frac{d}{dt} g^{(t)}|_{t=0}$. By conjugation with $g^{(t)}$, we obtain a 1-parameter family of holonomy $\rho^{(t)}$ yielding for $\gamma \in \pi_1$
   \[
   \sigma_{\gamma}= \left(\frac{d}{dt} \rho^{(t)}_{\gamma}|_{t=0}\right) \rho^{-1}_{\gamma} = \sigma_0 - \Ad \rho_{\gamma} (\sigma_0)
   \] 
   and hence $\sigma:\pi_1(S) \to so(2,1)$ is a coboundary.
   
    We then have an identification of the tangent space of the Teichm\"{u}ller space with the group cohomology defined as the quotient space
    \[ 
       T_{h}\Teich(S)\cong H^1_{\Ad\rho}(\pi_1(S),so(2,1)) := \frac{Z^{1}_{\Ad\rho}(\pi_1(S),so(2,1))}{B^{1}_{\Ad \rho}(\pi_1(S),so(2,1))}.
    \]
    \subsection{Identification between $so(2,1)$ with $\mathbb{R}^{2,1}$} \label{Section:so21R}
    
    The action of the lie algebra $so(2,1)$ on Minkowski space can be represented by the Minkowski cross product with an element in $\mathbb{R}^{2,1}$ via an isomorphism
    \[
    \eta: so(2,1) \to \mathbb{R}^{2,1}
    \]
    satisfying for any $a\in so(2,1), x\in \mathbb{R}^{2,1}$
    \[
    a(x) = \eta(a) \times x.
    \]
    Observe that for any $g \in SO(2,1)$, we have
    \[
    \eta(gag^{-1}) \times x = gag^{-1}(x) = g( \eta(a) \times g^{-1}x) = (g (\eta(a))) \times x.
    \] 
   Thus under the isomorphism $\eta$, the adjoint action of $SO(2,1)$ on $so(2,1)$ becomes
    \begin{equation}\label{eq:adeta}
 \eta(\Ad g (a)) =g(\eta(a)) 
    \end{equation}
    for any element $g \in SO(2,1)$. Recall that the Killing form on $so(2,1)$ is given via the trace operator $\tr$. The map $\eta$ is an isometry in the sense that for any $a,\tilde{a} \in so(2,1)$
    \[
    \tr(a \tilde{a}) = \langle \eta(a), \eta(\tilde{a}) \rangle.
    \]
    
    \subsection{Weil–Petersson symplectic form on the Teichm\"{u}ller space}\label{sec:wg}
    
    We express the Weil-Petersson symplectic form on the Teichm\"{u}ller space in terms of the wedge product in de Rham cohomology, following the work of Goldman \cite{Goldman1984}. We shall see that similar formulation appears to discrete harmonic maps as well. We refer \cite{FS2020, Labourie2013} for more detailed discussion.
    
    Given a closed hyperbolic surface $(S,h)$, we construct a flat vector bundle $F_{\rho}$. 
    \[
    F_{\rho} := (\mathbb{H}^2 \times so(2,1))/ \pi_1(S)\] 
    where $\pi_1(S)$ acts on $ (\mathbb{H}^2 \times  so(2,1))$ by
    \[
    \gamma \cdot (p,x) = (\rho_{\gamma}(p), \Ad \rho_{\gamma}(x))
    \]
    for $\gamma \in \pi_1(S)$. A lift of a $F_{\rho}$-valued 1-form $\alpha$ on $S$ to the universal cover $\tilde{S} \cong \mathbb{H}$ is a $so(2,1)$-valued 1-form $\tilde{\alpha}$ satisfying
    \[
    \tilde{\alpha}\circ \gamma= \Ad \rho_{\gamma} ( \tilde{\alpha}).
    \]
    We write $Z^1(S,F_{\rho})$ the space of closed $F_{\rho}$-valued 1-forms. It contains the subspace $B^1_{\text{dR}}(S,F_{\rho})$ of exact 1-forms. We then denote $H^1_{\text{dR}}(S,F_{\rho})$ the de Rham cohomology on the surface $S$ with values in $F_{\rho}$ as the quotient space
    \[
    H^1_{\text{dR}}(S,F_{\rho}) := \frac{Z^1_{\text{dR}}(S,F_{\rho})}{B^1_{\text{dR}}(S,F_{\rho})}.
    \]
     It is known that there is an isomorphism 
    \begin{align*}
    	\Phi:H^1_{\Ad(\rho)}(\pi_1(S),so(2,1)) &\to H^1_{\text{dR}}(S,F_{\rho})
    \end{align*}
    given via the periods along loops. To illustrate this mapping, let $p\in \tilde{S}$ be a lift of the base point for the fundamental group. Let $\alpha$ be a closed $F_{\rho}$-valued 1-form and $\tilde{\alpha}$ be its lift. Then one defines $\sigma:\pi_1(S) \to so(2,1)$ such that for $\gamma \in \pi_1(S)$
    \[
    \sigma_{\gamma}= \int_{p}^{\gamma (p)} \tilde{\alpha}
    \]
    where the line integral is path-independent since $\tilde{\alpha}$ is closed. For any $\gamma_1,\gamma_2 \in \pi_1(S)$, one has
    \[
    \sigma_{\gamma_1 \gamma_2}=  \int_{p}^{\gamma_1 (p)} \tilde{\alpha} + \int_{p}^{\gamma_2 (p)} \tilde{\alpha}\circ \gamma_1=\sigma_{\gamma_1 }+ \Ad\rho_{\gamma_1} (\sigma_{\gamma_2}).
    \]
    It yields that $\sigma$ is a cocycle. If $\alpha$ is changed by a $F_{\rho}$-valued exact 1-form or another lift of the base point is used, then $\sigma$ is changed by a coboundary. Thus the isomorphism is defined over the quotient space such that $\Phi([\sigma])=[\alpha]$. 
    
    On the other hand, since the universal cover is simply connected, one can consider a primitive function of the closed 1-form, namely $h:\tilde{S} \to so(2,1)$ defined by
    \[
    h(x) := \int_p^x \tilde{\alpha}.
    \]
    Observe that $h(p)=0$ and
    \begin{align*}
    	  \sigma_{\gamma} = h( \gamma(p)) - \Ad \rho_{\gamma} (h(p))=& h( \gamma(x)) - \Ad \rho_{\gamma} (h(x)) - \int_{p}^{x} (\tilde{\alpha}\circ \gamma - \Ad \rho_{\gamma} (\tilde{\alpha})) \\
    	  =& h( \gamma(x)) - \Ad \rho_{\gamma} (h(x))
    \end{align*}
    which remains constant for all $x \in \tilde{S}$.
    
    With the isomorphism $\Phi$, results of Goldman \cite{Goldman1984} showed the Weil-Petersson symplectic form can be expressed such that for $\sigma,\tau \in H^1_{\Ad(\rho)}(\pi_1(S),so(2,1))$
    \[
    \omega_G([\sigma],[\tau])=  \iint_S \tr( \Phi(\sigma) \wedge \Phi(\tau) )
    \]
    We shall expand the integral to verify that indeed it is expressed in terms solely of $\sigma$ and $\tau$. Consider a fundamental domain $\mathcal{F}$ on the universal cover by cutting $S$ along generators $\gamma_1,\gamma_2,\dots,\gamma_{2g} \in \pi_1(S)$ such that
    \[
    \gamma_1 \circ \gamma_2 \circ \gamma_1^{-1} \circ \gamma_2^{-1} \dots \gamma_{2g-1} \circ \gamma_{2g} \circ \gamma_{2g-1}^{-1} \circ \gamma_{2g}^{-1} =1 \in \pi_1(S).
    \]
    Then $\mathcal{F}$ is a 4g-polygon with sides matched in distinct pairs, i.e. the boundary is written as
    \[
    \partial \mathcal{F} = \tilde{\gamma}_1 + \tilde{\gamma}_2 + \tilde{\gamma}'_1 + \tilde{\gamma}'_2 + \dots \tilde{\gamma}_{2g-1} + \tilde{\gamma}_{2g} + \tilde{\gamma}'_{2g-1} + \tilde{\gamma}'_{2g}
    \]
    where $ \tilde{\gamma}_r$ and  $\tilde{\gamma}'_r$ are some lifts of the loops $\gamma_r$ and $\gamma_r^{-1}$ to the universal cover. For $r=1,2,\dots,2g$, there is a unique $\delta_r \in  \pi_1 (S)$ carrying a side $ \tilde{\gamma}_r$ to the paired side $ \tilde{\gamma}_r'$ reversing the orientation via a deck transformation. We write $\alpha$, $\beta$ be the lift of closed 1-forms representing $\Phi(\sigma)$ and $\Phi(\tau)$. Let $h:\tilde{S} \to so(2,1)$ be a primitive function of $\alpha$. With these, we expand the integral using Stokes' theorem
    \begin{align}\label{eq:wgexplicit}
    	\begin{split}
    			\omega_G(\sigma, \tau)  &=  \int_{\partial F_g}  \tr (h \beta) \\
    		&= \sum_{r=1}^{2g}\left( \int_{\gamma_r} \tr( f \beta) -  \int_{\gamma_r} \tr( f\circ \delta_r \cdot \beta\circ \delta_r) \right) \\
    		&= \sum_{r=1}^{2g}\left( \int_{\gamma_r} \tr( \Ad \rho_{\delta_r} (f) \cdot \Ad \rho_{\delta_r} (\beta)) -  \int_{\gamma_r} \tr( f\circ \delta_r \cdot \beta\circ \delta_r) \right) \\ 
    		&= \sum_{r=1}^{2g}\left( \int_{\gamma_r} \tr( \Ad \rho_{\delta_r} (f) \cdot \beta \circ \delta_r) -  \int_{\gamma_r} \tr( f\circ \delta_r \cdot \beta\circ \delta_r) \right) \\
    		&= -\sum_{r=1}^{2g}\tr\left( (f\circ \delta_r - \Ad \rho_{\delta_r} (f))    (\int_{\gamma_r} \beta \circ \delta_r) \right) \\
    		&= -\sum_{r=1}^{2g}\tr\left( \sigma_{\delta_r}  (\int_{\gamma_r} \beta \circ \delta_r) \right)
    	\end{split}
    \end{align}
    where we used the fact for each $r$,  the function $(f\circ \delta_r - \Ad \rho_{\delta_r} (f))$ is constant over the universal cover $\tilde{S}$ and equal to $\sigma_{\delta_r} $. The line integral 
    \[
    \int_{\gamma_r} \beta \circ \delta_r
    \]
    is the evaluation of $\tau$ at some element in $\pi_1(S)$. We will see that the same expression appears to discrete harmonic maps.
    
      	\section{Weighted Delaunay decomposition}\label{sec:delaunay}
	
	\subsection{Delaunay decomposition}
	Given a topological cell decomposition $(V,E,F)$ of $S$, a \textit{geodesic decomposition} of a hyperbolic surface $(S,h)$ is an embedding of the 1-skeleton graph such that every edge is realized as a piece of non-degenerate geodesic. By lifting to the universal cover, every geodesic decomposition of a closed hyperbolic surface $(S,h)$ induces a geodesic decomposition of the hyperbolic plane. It yields a developing map $f:(\tilde{V},\tilde{E}) \to \mathbb{H}^2\subset \mathbb{R}^{2,1}$ equivariant with respect to $\rho$, where $\rho$ is the holonomy associated to the hyperbolic structure $h$. Particularly, for any $\gamma \in \pi_1 (S)$ and $i \in \tilde{V}$
	\[
	f_{\gamma(i)}=\rho_{\gamma} (f_i).
	\]
	
	A \textit{Delaunay decomposition} is a geodesic decomposition satisfying an empty disk condition: for each face, there is a circle passing through all vertices of the face and does not contain any other vertices in the associated closed disk. Recall that a hyperbolic circle on $\mathbb{H}^2 \subset \mathbb{R}^{2,1}$ is the intersection of an affine plane with the hyperboloid $\mathbb{H}^2$. It yields that for a Delaunay decomposition, the developing map $f:\tilde{V} \to \mathbb{H}^2\subset \mathbb{R}^{2,1}$ has planar faces and for each face, all other vertices lie above the associated affine plane. Thus, it defines a $\rho$-equivariant convex polyhedral surface in $\mathbb{R}^{2,1}$. The converse statement holds as well. Namely, a geodesic decomposition is Delaunay if and only if the developing map $f$ yields a convex polyhedral surface with the same combinatoric.
	
	Given the convex polyhedral surface $f$ coming from a Delaunay decomposition, we consider a dual convex polyhedral surface $f^{\dagger}$ in $\mathbb{R}^{2,1}$, which is the polar of $f$ with respect to the hyperboloid. Combinatorially, $f$, $f^{\dagger}$ are related such that vertices of one correspond to the faces of the other while edges joining pairs of vertices of one correspond to edges shared between pairs of faces of the other. Geometrically, for each $i\in \tilde{V}$, the tangent plane of the hyperboloid at $f_i$ contains the corresponding dual face under $f^{\dagger}$. For each $\phi \in \tilde{F}$, the point $f^{\dagger}_{\phi}$ is the intersection of the tangent planes at $f_i$'s, where $i$ is a vertex of $\phi$. Particularly,
	\begin{align}\label{eq:polar}
		\langle f^{\dagger}_{\phi}, f_i \rangle =-1.
	\end{align}
	It yields that the dual polyhedral surface $f^{\dagger}$ has space-like faces and $\rho$-equivariant:  for any $\gamma \in \pi_1 (S)$ and $\phi \in \tilde{F}$
	\[
	f^{\dagger}_{\gamma(\phi)}=\rho_{\gamma} (f^{\dagger}_{\phi}).
	\]
	
	We consider a canonical edge weight $c:E \to \mathbb{R}_{>0}$ associated to a Delaunay decomposition
	\begin{equation}\label{eq:cedgeweight}
	c_{ij}:=	\frac{|| f^{\dagger}_{ij,l} - f^{\dagger}_{ij,r}||}{\ell_{ij}} 
	\end{equation}
	where $(ij,l)$ and $(ij,r)$ denote the left face and the right face of the oriented edge $ij$ while $\ell_{ij}$ is the hyperbolic length of the geodesic edge $ij$. Observe that following Equation \eqref{eq:polar}, we thus have for every oriented edge $ij$,
	\begin{align*}
     f^{\dagger}_{ij,l}-f^{\dagger}_{ij,r}= \frac{||f^{\dagger}_{ij,l}-f^{\dagger}_{ij,r}||}{||f_i \times f_j||}(f_i \times f_j) = \frac{ c_{ij} \ell_{ij}}{\sinh \ell_{ij}} (f_i \times f_j).
	\end{align*}

    The duality between polyhedral surfaces $f$ and $f^{\dagger}$ will be used throughout the following sections. It can be seen as the duality between Delaunay decompositions and Voronoi diagrams, which we shall not explore here.
	
	\begin{remark}\label{rmk:deweight}
		The induced edge weight associated to a Delaunay triangulation in Equation \eqref{eq:cedgeweight} has an explicit formula. As computed in \cite{Lam2024}, with notations in Figure \ref{fig:triangle}, we have
		\[
			|| f^{\dagger}_{ij,l} - f^{\dagger}_{ij,r}||= \left(\cot( \frac{\pi - \alpha_{jk}^i - \alpha_{ki}^j + \alpha_{ij}^k}{2}) + \cot( \frac{\pi - \alpha_{lj}^i - \alpha_{il}^j + \alpha_{ji}^l}{2})\right) \tanh\frac{\ell_{ij}}{2}
		\]
		and thus
		\[
		c_{ij} =  \left(\cot( \frac{\pi - \alpha_{jk}^i - \alpha_{ki}^j + \alpha_{ij}^k}{2}) + \cot( \frac{\pi - \alpha_{lj}^i - \alpha_{il}^j + \alpha_{ji}^l}{2})\right) \frac{\tanh\frac{\ell_{ij}}{2}}{\ell_{ij}}.
		\]
		\begin{figure}
			\centering
		\includegraphics[width=0.8\textwidth]{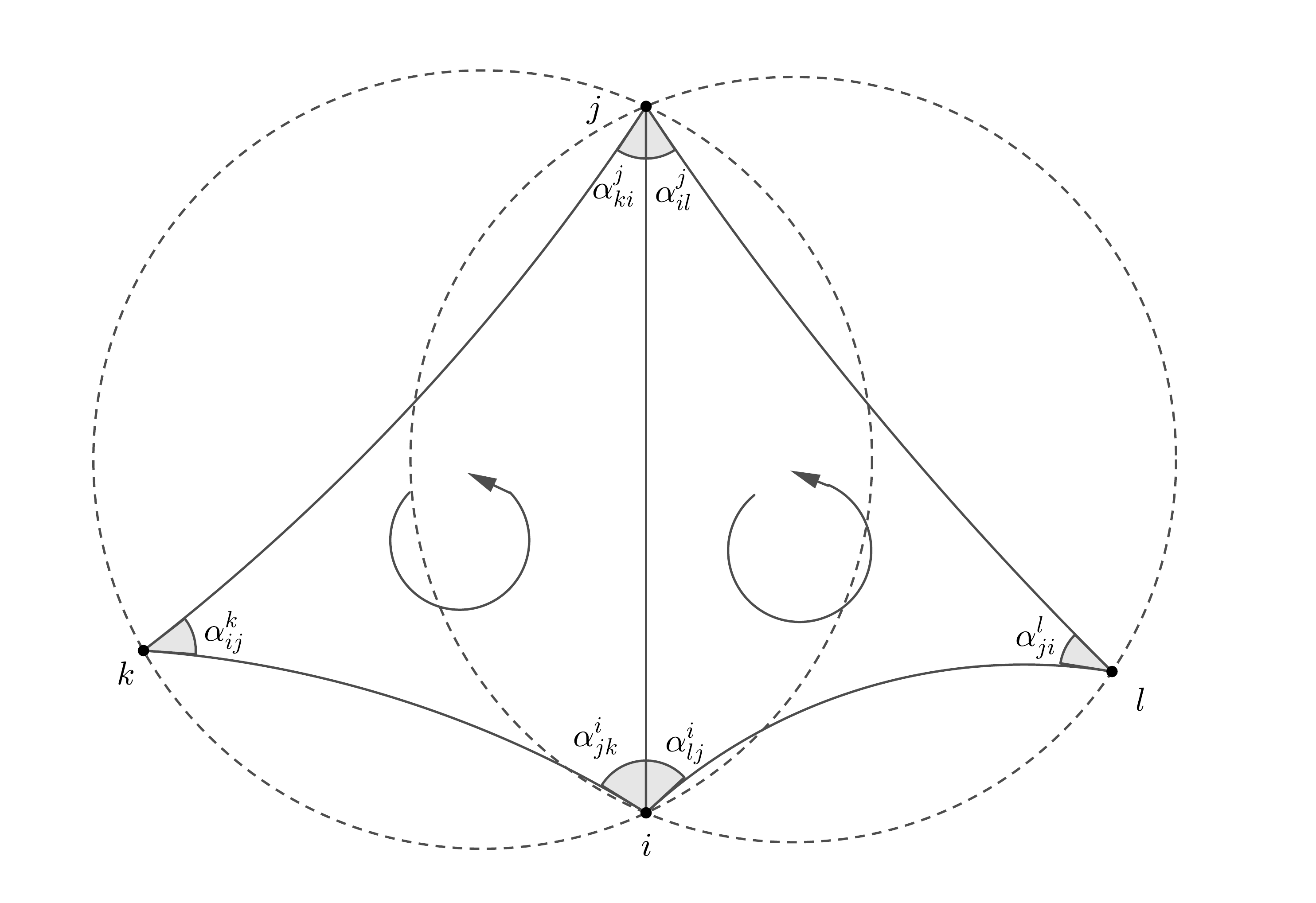}
		\caption{Two neighbouring hyperbolic triangles in a Delaunay triangulation.}
		\label{fig:triangle}
	\end{figure}
	\end{remark}

	\subsection{Weighted Delaunay decomposition}
	
	For the sake of the main arguments, we take an abstract definition of a weighted Delaunay decomposition.
	
	\begin{definition}\label{def:wdelaunay}
		A geodesic decomposition of $(S,h)$ is a weighted Delaunay decomposition with vertex weight $\delta:V \to \mathbb{R}_{>0}$ if there is a $\rho$-equivariant dual convex polyhedral surface $f^{\dagger}$ such that for every face $\phi \in \tilde{F}$ and any $i \in \phi$,
		\begin{align}\label{eq:polard}
				\langle f^{\dagger}_{\phi}, f_i \rangle = -\delta_{p(i)}
		\end{align}
		where $\rho$ is the holonomy associated to the hyperbolic metric and $f:(\tilde{V},\tilde{E}) \to \mathbb{H}^2 \subset \mathbb{R}^{2,1}$ is the developing map. Here $p$ is the projection of the universal cover to the closed surface. The induced edge weight is defined by Equation \eqref{eq:cedgeweight}.
	\end{definition}
	
	A Delaunay decomposition is a weighted Delaunay decomposition with vertex weight $\delta \equiv 1$. The dual surface $f^{\dagger}$ is uniquely determined by $f$ and $\delta$ via Equation \eqref{eq:polard} if exists. There are geodesic decompositions that are not weighted Delaunay for any choice of vertex weights (See \cite[Example 6.4]{Lam2022}). 
	
	Rewriting Equation \eqref{eq:polard} into
	\begin{align*}
		\langle f^{\dagger}_{\phi}, \tilde{f}_i \rangle = -1, \quad \text{where}\quad  \tilde{f}_i:= \frac{f_i}{\delta_i}
	\end{align*} 
   yields that $\tilde{f}$ defines a convex polyhedra surface polar dual to $f^{\dagger}$ with respect to the hyperboloid. Both surfaces $\tilde{f}$ and $f^{\dagger}$ are $\rho$-equivariant. Equation \eqref{eq:polard} also implies that for every oriented edge $ij$,
   \begin{align*}
   	f^{\dagger}_{ij,l}-f^{\dagger}_{ij,r}= \frac{||f^{\dagger}_{ij,l}-f^{\dagger}_{ij,r}||}{||\tilde{f}_i \times \tilde{f}_j||}(\tilde{f}_i \times \tilde{f}_j) = \frac{ c_{ij} \ell_{ij}}{\sinh \ell_{ij}} (f_i \times f_j)
   \end{align*}
    where $(ij,l)$ and $(ij,r)$ denote the left face and the right face of the oriented edge $ij$ while $\ell_{ij}$ is the hyperbolic length of the geodesic edge $ij$.
	
	Here we explain briefly the geometric meaning of the vertex weight when $\delta$ is close to $1$ (See \cite{Springborn2008} for further details).  The polar of $\tilde{f}_i$ is an affine plane containing the corresponding dual face under $f^{\dagger}$. When $\delta_i\geq1$, this affine plane intersects with the hyperboloid and yields a hyperbolic circle centered at $f_i$ with radius $\cosh^{-1} \delta_i$. This associate every vertex with a circle well-defined on the closed hyperbolic surface. When $\delta$ is furthermore close to $1$, the vertex circles are disjoint. Instead of empty disk condition, the Delaunay condition means that there is a face circle 
	intersecting its associated vertex circles orthogonally but no other vertex circles more than orthogonally. Indeed, the face circles are encoded by $f^{\dagger}$. The face circle corresponding to $\phi \in \tilde{F}$ is centered at $-\frac{f^{\dagger}_{\phi}}{||f^{\dagger}_{\phi}||} \in \mathbb{H}^2 \subset \mathbb{R}^{2,1}$ with radius $\cosh^{-1}(-||f^{\dagger}_{\phi}||)$, which is the intersection of the hyperboloid with an affine plane polar to $f^{\dagger}_{\phi}$. 
	When $\delta_i<1$, the vertex circle becomes imaginary and needs to be interpreted differently. Anyhow, the weighted Delaunay decomposition can be defined using the Voronoi diagram with distance modified by such vertex weights.

\section{Critical points of energy}	
	
	 The Dirichlet energy is associated to any geodesic realization of the 1-skeleton graph. We first consider critical points of the energy among all geodesic realizations within a homotopy class to a fixed hyperbolic surface. These critical geodesic realizations are called discrete harmonic maps. Afterwards, we investigate critical points of the energy where the targeted hyperbolic is allowed to vary over the Teichm\"{u}ller space, which consists of marked hyperbolic metrics.

	\subsection{Variation over a fixed hyperbolic surface}
	
	We first define a discrete harmonic map.

	 \begin{definition}
		A geodesic realization of $(V,E)$ into a hyperbolic surface $(S,\tilde{h})$ is a discrete harmonic map if it is a critical point of the energy $D_c$ among all geodesic realizations into the fixed hyperbolic surface within the same homotopy class.
	\end{definition}
	
	Colin de Verdière \cite{CdV1991} proved that for a fixed hyperbolic metric at the target, there exists a unique minimizer of the energy in the given homotopy class. In order to deduce the gradient of the energy over the Teichm\"{u}ller space, we consider several equivalent statements for discrete harmonic maps.
	
		We fix a hyperbolic metric $\tilde{h}$ and consider a geodesic realization of the 1-skeleton graph $(V,E)$ to the fixed hyperbolic surface $(S,\tilde{h})$. We lift it to the universal cover and obtain $f:(\tilde{V},\tilde{E}) \to \mathbb{H}^2 \subset \mathbb{R}^{2,1}$. For $ij \in E$, the geodesic length $\ell_{ij}$ of the edge is given via
	\[
	\cosh \ell_{ij} = -\langle f_i, f_j \rangle
	\]
	or equivalently
	\[
	\sinh \ell_{ij} = ||f_i \times f_j||.
	\]
    Suppose we have a 1-parameter family of geodesic realizations into the fixed hyperbolic surface $f^{(t)}:(\tilde{V},\tilde{E}) \to \mathbb{H}^2$ with $f= f^{(t)}|_{t=0}$ where the holonomy representation $	\rho \in \mbox{Hom}(\pi_1 M, SO(2,1))$ remains unchanged. We also write $\dot{f} = \frac{d}{dt} f^{(t)}|_{t=0}$ which satisfies for any $i \in \tilde{V}$ and $\gamma \in \pi_1(S)$
    \[
    	\dot{f}_{\gamma(i)}=\rho_{\gamma} (\dot{f}_i)
    \]
	and $\langle f_i , \dot{f}_i \rangle =0$. Since $\langle \dot{f},f \rangle =0$, we have the following identify
	\[
	\langle f_j- f_i,\dot{f}_j - \dot{f}_i \rangle =-( \langle \dot{f}_i, f_j \rangle + \langle f_i,\dot{f}_j \rangle) =   \langle f_i \times f_j,f_j \times \dot{f}_j - f_i \times \dot{f}_i \rangle.
	\]
	For every edge $ij\in E$ of the closed surface, we pick a lift of it on the universal cover. The collection of such edges form a subset $E'\subset \tilde{E}$ and there is a bijection $E'\cong E$. Abusing the notation, we also write $ij \in E'$ for the lift. Later on, we will specify $E'$ to be the edges in a fundamental domain.
	
	Now the change of Dirichlet energy can be written as
	\begin{align*}
			\frac{d}{dt} D_c(f^{(t)})|_{t=0}& = \sum_{ij\in E} c_{ij} \ell_{ij} \dot{\ell}_{ij}\\
			&=- \sum_{ij \in E'} \frac{ c_{ij} \ell_{ij}}{\sinh \ell_{ij}}( \langle \dot{f}_i, f_j \rangle + \langle f_i,\dot{f}_j \rangle) \\ &=  \sum_{ij \in E'} \frac{ c_{ij} \ell_{ij}}{\sinh \ell_{ij}} \langle f_j- f_i,\dot{f}_j - \dot{f}_i \rangle  \\
			&= \sum_{ij \in E'} \frac{ c_{ij} \ell_{ij}}{\sinh \ell_{ij}} \langle f_i \times f_j,f_j \times \dot{f}_j - f_i \times \dot{f}_i \rangle
	\end{align*}
   It yields the following observations.
   
    \begin{proposition}\label{prop:dualsurface}
    	Denote $f:(\tilde{V},\tilde{E}) \to \mathbb{H}^2 \subset \mathbb{R}^{2,1}$ the lift of a geodesic realization to the universal cover. The following characterizations are equivalent conditions for $f$ to be a discrete harmonic map:
        \begin{enumerate}[\hspace{0.2cm} (a)]
        	\item Critical point of the energy $D_c$ among all geodesic realizations into the fixed hyperbolic surface within the same homotopy class.
        	\item There exists a function $\mu:\tilde{V} \to \mathbb{R}$ such that for every fixed $i\in \tilde{V}$
        	\begin{equation}\label{eq:maxa}
        		\sum_{j} \frac{c_{ij} \ell_{ij}}{\sinh \ell_{ij}} (f_j- f_i) = \mu_i f_i
        	\end{equation}
        	where the summation is over all edges connecting to the vertex $i$ on the lift $(\tilde{V},\tilde{E})$. Here $\mu$ plays the role of the Lagrange multiplier for the constraint $\langle f, f \rangle \equiv -1$, satisfying $\mu_{\gamma(i)}= \mu_i$ for $\gamma \in \pi_1(S)$.  
        	\item For every fixed $i\in \tilde{V}$
        		\begin{equation}\label{eq:maxc}
        	\sum_{j} \frac{ c_{ij} \ell_{ij}}{\sinh \ell_{ij}} (f_i \times f_j) = 0
        \end{equation}
        	where the summation is over all edges connecting to the vertex $i$ on the universal cover.
        	\item There exists a realization of the dual graph $(V^*,E^*)$ into Minkowski space $\mathbb{R}^{2,1}$ such that
        	\[
        	f^{\dagger}_{ij,l} - f^{\dagger}_{ij,r}= \frac{ c_{ij} \ell_{ij}}{\sinh \ell_{ij}} (f_i \times f_j)
        	\]
        	for every $ij \in \tilde{E}$. The map $f^{\dagger}$ is unique up to translation in $\mathbb{R}^{2,1}$.
        \end{enumerate}
    \end{proposition}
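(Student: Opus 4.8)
The plan is to establish (a) $\Leftrightarrow$ (b), then (b) $\Leftrightarrow$ (c), then (c) $\Leftrightarrow$ (d), using throughout the first-variation formula for $D_c$ computed just above the statement.

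For (a) $\Leftrightarrow$ (b): near a given geodesic realization $f$, a geodesic realization into the fixed $(S,\tilde h)$ within the same homotopy class (hence with the same holonomy $\rho$) is completely determined by the equivariant vertex positions, since edges are unique non-degenerate geodesics and the embedding condition is open. Thus an admissible infinitesimal variation is any tuple $(\dot f_i)_{i\in V'}$, where $V'$ is a set of $\pi_1(S)$-orbit representatives of vertices, with $\dot f_i\in T_{f_i}\mathbb H^2=f_i^{\perp}$, extended equivariantly by $\dot f_{\gamma(i)}=\rho_\gamma(\dot f_i)$. I would take $\frac{d}{dt}D_c(f^{(t)})|_{t=0}=-\sum_{ij\in E'}\frac{c_{ij}\ell_{ij}}{\sinh\ell_{ij}}(\langle\dot f_i,f_j\rangle+\langle f_i,\dot f_j\rangle)$ and regroup it by vertices: each of the two inner products in a summand is attached to one endpoint, and collecting for each orbit representative $i$ the contributions of all edge-ends over the orbit of $i$ and transporting them back by the deck action — using that $\rho_\gamma\in SO^+(2,1)$ preserves $\langle\cdot,\cdot\rangle$, that $f,\dot f$ are equivariant, and that $\ell_{ij},c_{ij}$ are $\pi_1$-invariant — collapses the sum to $\frac{d}{dt}D_c(f^{(t)})|_{t=0}=-\sum_{i\in V'}\langle\dot f_i,\,w_i\rangle$ with $w_i:=\sum_j\frac{c_{ij}\ell_{ij}}{\sinh\ell_{ij}}f_j$, the sum over neighbours $j$ of $i$ in $\tilde E$. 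Since the $\dot f_i$ vary freely in the $2$-planes $f_i^{\perp}$, the realization is critical iff $w_i\perp f_i^{\perp}$, i.e. $w_i\in(f_i^{\perp})^{\perp}=\mathbb R f_i$ (nondegeneracy, $\langle f_i,f_i\rangle=-1\neq0$). Writing $w_i=\sum_j\frac{c_{ij}\ell_{ij}}{\sinh\ell_{ij}}(f_j-f_i)+(\sum_j\frac{c_{ij}\ell_{ij}}{\sinh\ell_{ij}})f_i$ rewrites ``$w_i\in\mathbb R f_i$'' as Equation \eqref{eq:maxa}, with $\mu_i$ the resulting coefficient of $f_i$; equivariance $\mu_{\gamma(i)}=\mu_i$ is immediate since every other term in \eqref{eq:maxa} is equivariant.

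For (b) $\Leftrightarrow$ (c): taking the Minkowski cross product of \eqref{eq:maxa} with $f_i$ on the left and using $f_i\times(f_j-f_i)=f_i\times f_j$ and $f_i\times f_i=0$ gives \eqref{eq:maxc}; conversely, if \eqref{eq:maxc} holds then $v_i:=\sum_j\frac{c_{ij}\ell_{ij}}{\sinh\ell_{ij}}(f_j-f_i)$ satisfies $f_i\times v_i=0$, and since $f_i$ is timelike the kernel of $x\mapsto f_i\times x$ is exactly $\mathbb R f_i$, so $v_i=\mu_i f_i$, which is \eqref{eq:maxa}. For (c) $\Leftrightarrow$ (d): regard $ij\mapsto\frac{c_{ij}\ell_{ij}}{\sinh\ell_{ij}}(f_i\times f_j)$ as an $\mathbb R^{2,1}$-valued discrete $1$-form on the dual graph on $\tilde S$; it is well defined and changes sign under reversal of $ij$, since $(ji,l)=(ij,r)$, $(ji,r)=(ij,l)$ and $f_j\times f_i=-f_i\times f_j$, matching the antisymmetry of $f^{\dagger}_{ij,l}-f^{\dagger}_{ij,r}$. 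A function $f^{\dagger}$ on dual vertices with these prescribed differences exists iff this $1$-form is closed, i.e. has vanishing sum around every cycle of the dual graph; since $\tilde S$ is simply connected, that cycle space is spanned by boundaries of dual faces, and the dual face at a primal vertex $i$ runs through exactly the primal faces around $i$. With a consistent orientation convention the sum of the $1$-form around that dual face is $\pm\sum_j\frac{c_{ij}\ell_{ij}}{\sinh\ell_{ij}}(f_i\times f_j)$, so closedness is precisely \eqref{eq:maxc}; the converse (d) $\Rightarrow$ (c) is the telescoping of $f^{\dagger}_{ij,l}-f^{\dagger}_{ij,r}$ around the dual face at $i$. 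Uniqueness of $f^{\dagger}$ up to an additive constant, a translation in $\mathbb R^{2,1}$, holds because the dual graph is connected.

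The step I expect to be the main obstacle is (c) $\Leftrightarrow$ (d): one must fix orientation conventions carefully enough that the sum of the discrete $1$-form around the dual face at $i$ is literally (up to a uniform sign) the left-hand side of \eqref{eq:maxc}, and invoke cleanly the discrete Poincar\'e lemma that a closed $\mathbb R^{2,1}$-valued $1$-form on a graph in a simply connected surface is a coboundary. The regrouping in (a) $\Rightarrow$ (b) — collapsing an edge sum to a vertex sum via the deck action — is of the same flavour as the derivation of $\omega_G$ in \eqref{eq:wgexplicit} and also warrants care, but is routine once a fundamental domain is fixed.
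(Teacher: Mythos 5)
Your proposal is correct and follows essentially the same route as the paper: the first variation of $D_c$ with respect to a single vertex position (using $\langle \dot f_i,f_i\rangle=0$) gives (a)$\leftrightarrow$(b), taking the Minkowski cross product with $f_i$ gives (b)$\leftrightarrow$(c), and (c) is exactly the closedness around dual faces of the antisymmetric edge function $a_{ij}=\frac{c_{ij}\ell_{ij}}{\sinh\ell_{ij}}(f_i\times f_j)$, which can then be integrated on the simply connected universal cover to produce $f^{\dagger}$, unique up to translation. You merely spell out details the paper leaves implicit (the equivariant regrouping of the edge sum, the kernel of $x\mapsto f_i\times x$, and the discrete Poincar\'e lemma), so no substantive difference.
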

\begin{proof}
	To obtain (a)$\leftrightarrow$(b), consider an infinitesimal deformation moving vertex $i$ only but with other vertices fixed. Then
	\[
		\frac{d}{dt} D_c(f^{(t)})|_{t=0} = 	-\langle \sum_{j} \frac{c_{ij} \ell_{ij}}{\sinh \ell_{ij}} (f_j- f_i),\dot{f}_i\rangle.
	\]
	Since $\dot{f}_i$ is an arbitrary vector in the tangent plane, we deduce the sum has to be parallel to the normal direction $f_i$ and hence Equation \eqref{eq:maxa}. The relation (b)$\leftrightarrow$(c) is obtained by taking the cross product with $f_i$ on both sides. Equation \eqref{eq:maxc} implies that the function $\mathbb{R}^{2,1}$-valued function 
	\[
	a_{ij}:=\frac{ c_{ij} \ell_{ij}}{\sinh \ell_{ij}} (f_i \times f_j)
	\]
	satisfying $a_{ij}=-a_{ji}$ is closed around every dual face. Hence it can be integrated on the universal cover. 
\end{proof}

The realization of the dual graph is crucial to our study. 
   
   \begin{proposition}\label{prop:dualsurcocycle}
   	Every discrete harmonic map to a hyperbolic surface $(S,h)$ induces a realization of the dual graph $f^{\dagger}:(V^*,E^*) \to \mathbb{R}^{2,1}$ and a cocycle $\tau: \pi_1(S) \to so(2,1)$ such that for $\gamma \in \pi_1(S)$ and $\phi \in \tilde{F}$
   	  \[
   	f^{\dagger}_{\gamma(\phi)}- \rho_\gamma(f^{\dagger}_{\phi})=\eta(\tau_{\gamma})  
   	\]
   	where $\rho \in \mbox{Hom}(\pi_1 (S), SO(2,1))$ is the holonomy representation of the hyperbolic metric $h$. Whenever $f^{\dagger}$ is translated by a constant, then $\tau$ differs by a coboundary. Thus every discrete harmonic map yields a well-defined equivalence class
   	\[
   	[\tau] \in H^1_{\Ad(\rho)}(\pi_1(S),so(2,1)) \cong T_{h}\Teich(S).
   	\]
   \end{proposition}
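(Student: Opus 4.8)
The plan is to take the dual realization $f^{\dagger}\colon(V^*,E^*)\to\mathbb{R}^{2,1}$ furnished by Proposition~\ref{prop:dualsurface}(d), measure its failure to be $\rho$-equivariant, show that this failure is a single vector depending only on $\gamma\in\pi_1(S)$, and transport it through $\eta^{-1}$ to obtain $\tau$.

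First I would, for each $\gamma\in\pi_1(S)$ and $\phi\in\tilde F$, introduce the defect $v^{(\gamma)}_{\phi}:=f^{\dagger}_{\gamma(\phi)}-\rho_{\gamma}(f^{\dagger}_{\phi})$ and argue that it is independent of $\phi$. Since the dual $1$-skeleton of the cell decomposition of the simply connected surface $\tilde S$ is connected, it suffices to compare two faces $\phi,\psi$ sharing a dual edge, with common primal edge $ij$ oriented so that $\phi=(ij,l)$, $\psi=(ij,r)$. By part (d), $f^{\dagger}_{\phi}-f^{\dagger}_{\psi}=a_{ij}$ with $a_{ij}=\frac{c_{ij}\ell_{ij}}{\sinh\ell_{ij}}(f_i\times f_j)$. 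Because $\rho_{\gamma}\in SO^{+}(2,1)$ is orientation preserving, $\gamma(\phi)$ and $\gamma(\psi)$ are again the left and right faces of the edge $\gamma(i)\gamma(j)$, so $f^{\dagger}_{\gamma(\phi)}-f^{\dagger}_{\gamma(\psi)}=a_{\gamma(i)\gamma(j)}$. The vector $a$ is $\rho$-equivariant: the weight $c$ descends to $E$, $\rho_{\gamma}$ is a Minkowski isometry so $\ell_{\gamma(i)\gamma(j)}=\ell_{ij}$, and the cross product is $SO(2,1)$-equivariant so $f_{\gamma(i)}\times f_{\gamma(j)}=\rho_{\gamma}(f_i\times f_j)$; hence $a_{\gamma(i)\gamma(j)}=\rho_{\gamma}(a_{ij})$ and $v^{(\gamma)}_{\phi}-v^{(\gamma)}_{\psi}=a_{\gamma(i)\gamma(j)}-\rho_{\gamma}(a_{ij})=0$. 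Denote the common value $v_{\gamma}$ and set $\tau_{\gamma}:=\eta^{-1}(v_{\gamma})$, so that $f^{\dagger}_{\gamma(\phi)}-\rho_{\gamma}(f^{\dagger}_{\phi})=\eta(\tau_{\gamma})$ for every $\phi$.

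Next I would verify the cocycle condition. Fixing $\phi$ and applying the defect relation twice, together with the homomorphism property of Equation~\eqref{eq:holrep},
\[
f^{\dagger}_{\gamma_1\gamma_2(\phi)}=v_{\gamma_1}+\rho_{\gamma_1}\!\big(f^{\dagger}_{\gamma_2(\phi)}\big)=v_{\gamma_1}+\rho_{\gamma_1}(v_{\gamma_2})+\rho_{\gamma_1\gamma_2}\!\big(f^{\dagger}_{\phi}\big),
\]
so $v_{\gamma_1\gamma_2}=v_{\gamma_1}+\rho_{\gamma_1}(v_{\gamma_2})$; applying $\eta^{-1}$ and invoking Equation~\eqref{eq:adeta} turns this into $\tau_{\gamma_1\gamma_2}=\tau_{\gamma_1}+\Ad\rho_{\gamma_1}(\tau_{\gamma_2})$, i.e.\ the cocycle condition \eqref{eq:cocycle}. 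Finally, since Proposition~\ref{prop:dualsurface}(d) determines $f^{\dagger}$ only up to a translation $f^{\dagger}_{\phi}\mapsto f^{\dagger}_{\phi}+c_0$, this replaces $v_{\gamma}$ by $v_{\gamma}+c_0-\rho_{\gamma}(c_0)$ and hence $\tau_{\gamma}$ by $\tau_{\gamma}+\tau_0-\Ad\rho_{\gamma}(\tau_0)$ with $\tau_0:=\eta^{-1}(c_0)$, which is precisely a coboundary. Therefore $[\tau]$ is well defined, and by the identification of Section~\ref{sec:tanteich} it lies in $H^1_{\Ad\rho}(\pi_1(S),so(2,1))\cong T_h\Teich(S)$.

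The argument is essentially formal once Proposition~\ref{prop:dualsurface}(d) is in hand; the only delicate point is the bookkeeping with the $\pi_1(S)$-action — using orientation-preservation of $\rho$ so that the left/right labelling of faces is respected, and keeping the composition convention $(\gamma_1\gamma_2)(\phi)=\gamma_1(\gamma_2(\phi))$ consistent with $\rho_{\gamma_1\gamma_2}=\rho_{\gamma_1}\rho_{\gamma_2}$. I expect getting the order of the factors right in the cocycle computation to be the main, if minor, obstacle.
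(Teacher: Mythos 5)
Your proposal is correct and follows essentially the same route as the paper: extract $f^{\dagger}$ from Proposition \ref{prop:dualsurface}(d), use the $\rho$-equivariance of the edge differences $\frac{c_{ij}\ell_{ij}}{\sinh \ell_{ij}}(f_i\times f_j)$ to show the defect $f^{\dagger}_{\gamma(\phi)}-\rho_{\gamma}(f^{\dagger}_{\phi})$ is independent of $\phi$, verify the cocycle identity via $\rho_{\gamma_1\gamma_2}=\rho_{\gamma_1}\rho_{\gamma_2}$ and Equation \eqref{eq:adeta}, and check that translating $f^{\dagger}$ changes $\tau$ by a coboundary. The only difference is that you spell out the constancy-in-$\phi$ step (connectedness of the dual graph and orientation-preservation of $\rho_{\gamma}$) which the paper leaves implicit.
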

\begin{proof}
The previous Proposition \ref{prop:dualsurface} shows that there exists a realization of the dual graph $f^{\dagger}:(V^*,E^*) \to \mathbb{R}^{2,1}$ unique up to translation. Since $f$ is equivariant with respect to $\rho \in \mbox{Hom}(\pi_1(S), SO(2,1))$, we have by construction that for any $\gamma \in \pi_1(S)$ and $ij \in \tilde{E}$
\[
f^{\dagger}_{\gamma(ij,l)} - f^{\dagger}_{\gamma(ij,r)} =\frac{ c_{ij} \ell_{ij}}{\sinh \ell_{ij}} (f_{\gamma(i)} \times f_{\gamma(j)}) = \rho_{\gamma}(	f^{\dagger}_{ij,l} - f^{\dagger}_{ij,r}).
\]
It implies for $\gamma \in \pi_1(S)$, there exists a translation vector $\mathfrak{t}_{\gamma} \in \mathbb{R}^{2,1}$ independent of $\phi \in \tilde{F}$ such that 
\[
f^{\dagger}_{\gamma(\phi)}- \rho_\gamma(f^{\dagger}_{\phi})=\mathfrak{t}_{\gamma}.
\]
In this way, we get a mapping $\mathfrak{t}: \pi_1(S) \to \mathbb{R}^{2,1}$ satisfying for every $\gamma_1,\gamma_2 \in \pi_1(S)$ 
	\begin{align*}
		\mathfrak{t}_{\gamma_1 \gamma_2}=&f^{\dagger}_{\gamma_1\gamma_2(\phi)}- \rho_{\gamma_1\gamma_2}(f^{\dagger}_{\phi}) \\=& f^{\dagger}_{\gamma_1\gamma_2(\phi)}-\rho_{\gamma_1}(f^{\dagger}_{\gamma_2(\phi)}) + \rho_{\gamma_1}(f^{\dagger}_{\gamma_2(\phi)}) - \rho_{\gamma_1} \rho_{\gamma_2}(f^{\dagger}_{\phi})\\ =& \mathfrak{t}_{\gamma_1} + \rho_{\gamma_1} (\mathfrak{t}_{\gamma_2}).
	\end{align*}
	Hence, together with Equation \eqref{eq:adeta}, we deduce that $\tau:=\eta^{-1} \circ \mathfrak{t}$ is a cocycle and represents $[\tau] \in H^1_{\Ad(\rho)}(\pi_1(S),so(2,1))$. 
	
	Suppose $\tilde{f}^{\dagger}= f^{\dagger}+c$ for some constant vector $c \in \mathbb{R}^{2,1}$. The realization $\tilde{f}^{\dagger}$ also defines a mapping $\tilde{\mathfrak{t}}: \pi_1(S) \to \mathbb{R}^{2,1}$ and a cocycle $\tilde{\tau}=\eta^{-1} \circ \tilde{\mathfrak{t}}$. For every $\gamma \in \pi_1 (S)$, we have
	\[
	\tilde{\mathfrak{t}}_{\gamma} - \mathfrak{t}_{\gamma}= c - \rho_{\gamma}(c)
	\]
	which implies
	\[
	\tilde{\tau}_{\gamma}-\tau_{\gamma} = \eta^{-1}(c) - \Ad \rho_{\gamma} (\eta^{-1}(c))
	\]
	Hence $\tilde{\tau}-\tau$ is a coboundary and $[\tilde{\tau}]=[\tau]$.
\end{proof}
	
	In the next section, we shall show that the equivalence class $[\tau]$ is the symplectic gradient of the energy $D_c$ over the Teichm\"{u}ller space.

	\begin{remark}
		Another equivalent characterization for a discrete harmonic map is that	for every fixed $i\in \tilde{V}$
		\[
		\sum_j c_{ij} \ell_{ij} U_{ij} =0.
		\]
		where $U_{ij}$ is the unit tangent vector of the geodesic from vertex $f_i$ to $f_j$ (See \cite{Toru2021}).
	\end{remark}
	
	\begin{remark}
		In the case that the edge weight $c$ is allowed to take negative values and the Lagrange multiplier $\mu \equiv 0$, the dual surfaces defined via Equation \eqref{eq:maxa} and \eqref{eq:maxc} are characterized as discrete maximal surfaces \cite{LamYashi2017}. These are cousins of minimal surfaces in Euclidean space \cite{Lam2016}.   
	\end{remark}
	
	\subsection{Variation over the Teichm\"{u}ller space} 
	
	Given a discrete harmonic map to a closed hyperbolic surface, we denote its lift $f:(\tilde{V},\tilde{E}) \to \mathbb{H}^2 \subset \mathbb{R}^{2,1}$. We consider a 1-parameter family of geodesic realizations into closed hyperbolic surfaces. Its lift to the universal cover $f^{(t)}:(\tilde{V},\tilde{E}) \to \mathbb{H}^2\subset \mathbb{R}^{2,1}$ satisfies $f= f^{(t)}|_{t=0}$ and $\dot{f} = \frac{d}{dt} f^{(t)}|_{t=0}$. Here the hyperbolic metrics at the target are allowed to change, which defines a 1-parameter family of holonomy representation $\rho^{(t)} \in \mbox{Hom}(\pi_1(S), SO(2,1))$. Denote $\dot{\rho}:= \frac{d}{dt}\rho^{(t)} |_{t=0}$. As discussed in Section \ref{sec:tanteich}, the infinitesimal deformation defines a cocycle \[ \sigma:=\dot{\rho} \rho^{-1}: \pi_1(S) \to so(2,1)\] representing an element 
	\[
	[\sigma] \in   H^1_{\Ad(\rho)}(\pi_1(S),so(2,1)).
	\]
	\begin{proposition}With the notations above, the change of discrete Dirichlet energy of a discrete harmonic map is
		\[
			\frac{d}{dt} D_c(f^{(t)})|_{t=0} = \omega_G(\sigma,\tau)
		\]
	where $\tau$ is a cocycle induced from the dual surface of the discrete harmonic map. Therefore, $[\tau]\in  H^1_{\Ad(\rho)}(\pi_1(S),so(2,1))$ is the symplectic gradient of the Dirichlet energy over the Teichm\"{u}ller space.
	\end{proposition}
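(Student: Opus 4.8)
The plan is to reduce the statement to the explicit formula \eqref{eq:wgexplicit} for $\omega_G$ derived in Section \ref{sec:wg}, by running a discrete summation‑by‑parts (Stokes) argument in which the dual surface $f^{\dagger}$ plays the role of the primitive of a de Rham representative of $\Phi(\tau)$, and the vectors $f_i\times\dot f_i$ play the role of a primitive of a representative of $\Phi(\sigma)$. First I would record the variation formula. The computation preceding Proposition \ref{prop:dualsurface} used only that each $f^{(t)}$ maps into $\mathbb{H}^2$, i.e. $\langle f_i,\dot f_i\rangle=0$, which still holds when the holonomy is allowed to vary; so, writing $b_i:=f_i\times\dot f_i$ and using characterization (d) of Proposition \ref{prop:dualsurface}, one gets
\[
\frac{d}{dt}D_c(f^{(t)})\big|_{t=0}=\sum_{ij\in E'}\frac{c_{ij}\ell_{ij}}{\sinh\ell_{ij}}\big\langle f_i\times f_j,\ f_j\times\dot f_j-f_i\times\dot f_i\big\rangle=\sum_{ij\in E'}\big\langle f^{\dagger}_{ij,l}-f^{\dagger}_{ij,r},\ b_j-b_i\big\rangle,
\]
where $E'$ is taken to consist of the edges lying in a fundamental domain $\mathcal{F}$ bounded by cut curves $\gamma_1,\dots,\gamma_{2g}$, exactly as in the derivation of \eqref{eq:wgexplicit}.

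Next I would identify the equivariance of the two ingredients. By Proposition \ref{prop:dualsurcocycle}, $f^{\dagger}_{\gamma(\phi)}=\rho_\gamma(f^{\dagger}_\phi)+\eta(\tau_\gamma)$, so $f^{\dagger}$ is a discrete primitive whose periods give $\tau$; moreover the edge vectors $a_{ij}:=f^{\dagger}_{ij,l}-f^{\dagger}_{ij,r}=\tfrac{c_{ij}\ell_{ij}}{\sinh\ell_{ij}}(f_i\times f_j)$ are $\rho$‑equivariant and satisfy $\langle a_{ij},f_i\rangle=\langle a_{ij},f_j\rangle=0$. On the other hand, differentiating $f^{(t)}_{\gamma(i)}=\rho^{(t)}_\gamma f^{(t)}_i$ at $t=0$, using $\dot\rho_\gamma\rho_\gamma^{-1}=\sigma_\gamma$, the identification $\eta$, and the Minkowski triple‑product identity $u\times(v\times u)=\langle u,v\rangle u-\langle u,u\rangle v$, one finds
\[
b_{\gamma(i)}=\rho_\gamma(b_i)+\eta(\sigma_\gamma)+\langle f_{\gamma(i)},\eta(\sigma_\gamma)\rangle\,f_{\gamma(i)},
\]
so that $b$ is, modulo multiples of $f$, a discrete primitive whose periods give $\sigma$. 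The key point is that the extra term $\langle f_{\gamma(i)},\eta(\sigma_\gamma)\rangle f_{\gamma(i)}$ is irrelevant: since $a_{ij}\perp f_i,f_j$, the sum $\sum_{ij\in E'}\langle a_{ij},b_j-b_i\rangle$ is unchanged if each $b_i$ is replaced by $b_i+\lambda_i f_i$ for an arbitrary scalar function $\lambda$, so one may treat $b$ as genuinely $\rho$‑equivariant up to the shift $\eta(\sigma_\gamma)$.

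I would then perform the discrete Stokes argument. Regrouping $\sum_{ij\in E'}\langle a_{ij},b_j-b_i\rangle$ vertex by vertex, the coefficient of each $b_i$ is $-\big\langle\sum_{j\sim i}a_{ij},\,b_i\big\rangle$, and for every interior vertex $i$ of $\mathcal{F}$ the closedness relation $\sum_{j\sim i}a_{ij}=0$ (characterization (c) of Proposition \ref{prop:dualsurface}) makes this vanish; only the vertices meeting $\partial\mathcal{F}$ survive, and the sum collapses to a sum over $\partial\mathcal{F}$, paired up by the deck transformations $\delta_r$. Substituting the equivariance relations above into these boundary terms — the $\rho_{\delta_r}$ factors cancelling exactly as in the passage through \eqref{eq:wgexplicit}, the $f$‑multiples dropping out by the orthogonality noted above, and $b\circ\delta_r-\rho_{\delta_r}(b)\equiv\eta(\sigma_{\delta_r})$ being constant along $\gamma_r$ — yields $-\sum_{r=1}^{2g}\tr\big(\sigma_{\delta_r}\cdot P_r\big)$, where $P_r$ is the discrete analogue of the period $\int_{\gamma_r}\beta\circ\delta_r$ built from the $a_{ij}$'s. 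Since the $so(2,1)$‑valued edge cochain $\eta^{-1}(a)$ and the de Rham form $\beta$ both represent $\Phi(\tau)\in H^1_{\mathrm{dR}}(S,F_\rho)$, the evaluation $P_r$ agrees with $\int_{\gamma_r}\beta\circ\delta_r$, so the expression equals the right‑hand side of \eqref{eq:wgexplicit}, i.e. $\omega_G(\sigma,\tau)$. Finally, this identity $dD_c|_{[\rho]}(\sigma)=\omega_G(\sigma,\tau)$ holds for every $[\sigma]\in H^1_{\Ad\rho}(\pi_1(S),so(2,1))\cong T_h\Teich(S)$ and, by Proposition \ref{prop:dualsurcocycle}, depends only on $[\tau]$; as $\omega_G$ is non‑degenerate, this is precisely the assertion that $[\tau]$ is the symplectic gradient of $D_c$ with respect to the Weil–Petersson form.

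The main obstacle I expect is the Stokes bookkeeping on the fundamental domain: choosing the cut curves $\gamma_r$ and the lift $E'$ compatibly, tracking the orientation conventions so that the discrete pairing of the dual‑edge cochain $(a_{ij})$ with the primal‑edge cochain $(b_j-b_i)$ matches $\iint_S\tr(\Phi(\sigma)\wedge\Phi(\tau))$ with the correct sign, and verifying that the surviving boundary sum $P_r$ is literally the evaluation of $\tau$ occurring in \eqref{eq:wgexplicit}. By contrast, the computation of $b_{\gamma(i)}$ and the observation that its $f$‑component is harmless are routine once the triple‑product identity is in hand.
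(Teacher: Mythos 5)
Your proposal is correct and follows essentially the same route as the paper: the same variation formula rewritten via the dual edge vectors $f^{\dagger}_{ij,l}-f^{\dagger}_{ij,r}$, the same equivariance computation for $f_i\times\dot f_i$ with the triple-product identity and the observation that the $f$-parallel term drops by orthogonality, the same summation by parts over a fundamental domain with harmonicity (characterization (c)) killing the interior vertices, and the same comparison of the surviving boundary periods with Equation \eqref{eq:wgexplicit}. No substantive differences to report.
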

	\begin{proof}
	Equation \eqref{eq:fequi} implies that the infinitesimal deformation $\dot{f}$ satisfies for any $i \in \tilde{V}$ and $\gamma \in \pi_1(S)$ 
\[
\dot{f}_{\gamma(i)}-\rho_{\gamma} (\dot{f}_i) = \dot{\rho}_{\gamma}(f_i).
\]
By the identification $\eta:so(2,1)\to \mathbb{R}^{2,1}$ in Section \ref{Section:so21R}, we have for all $x \in \mathbb{R}^{2,1}$
\[
\dot{\rho}_{\gamma} \rho^{-1}_{\gamma} (x) = \eta(\sigma_{\gamma}) \times x
\]
where the Minkowski cross product is used. For any $\gamma \in \pi_1(S)$, we thus have
\begin{align*}
	f_{\gamma(i)} \times \dot{f}_{\gamma(i)} &= 	\rho_{\gamma} f_i \times (\rho_{\gamma} \dot{f}_i  +\eta(\sigma_{\gamma}) \times \rho_{\gamma} f_i ) \\
	&=\rho_{\gamma}(f_i \times \dot{f}_i) + \rho_{\gamma} f_i \times (\eta(\sigma_{\gamma}) \times \rho_{\gamma} f_i) \\
	&= \rho_{\gamma}(f_i \times \dot{f}_i) + \eta(\sigma_{\gamma}) + \langle \eta(\sigma_{\gamma}), f_{\gamma(i)} \rangle f_{\gamma(i)}  
\end{align*}
where the formula for the triple Minkowski cross product is used: for any $u,v \in \mathbb{R}^{2,1}$
\[
u \times (v \times u)=-\langle u, u \rangle v + \langle u,v \rangle u.
\]

In order to express the derivative of the energy in terms of the symplectic form, we consider a fundamental domain on the universal cover consisting of dual faces as in Section \ref{sec:wg}. In fact, any fundamental domain works fine but we pick a standard one in order to simplify our notations. Based at a dual vertex, we cut $S$ along generators $\gamma_1,\gamma_2,\dots,\gamma_{2g} \in \pi_1 (S)$ such that
\[
\gamma_1 \circ \gamma_2 \circ \gamma_1^{-1} \circ \gamma_2^{-1} \dots \gamma_{2g-1} \circ \gamma_{2g} \circ \gamma_{2g-1}^{-1} \circ \gamma_{2g}^{-1} =1 \in \pi_1 (S).
\]
The loops consist of edges in the dual cell decomposition. We thus obtain a fundamental domain $\mathcal{F}$ on the universal cover. It is a 4$g$-polygon with boundary
\[
\partial \mathcal{F} = \tilde{\gamma}_1 + \tilde{\gamma}_2 + \tilde{\gamma}'_1 + \tilde{\gamma}'_2 + \dots \tilde{\gamma}_{2g-1} + \tilde{\gamma}_{2g} + \tilde{\gamma}'_{2g-1} + \tilde{\gamma}'_{2g}
\]
where $ \tilde{\gamma}_r$ and  $\tilde{\gamma}'_r$ are some lifts of the loops $\gamma_r$ and $\gamma_r^{-1}$ to the universal cover. For $r=1,2,\dots,2g$, there is a unique $\delta_r \in  \pi_1 (S)$ carrying a side $ \tilde{\gamma}_r$ to the paired side $ \tilde{\gamma}_r'$ reversing the orientation via a deck transformation. We define $E' \subset \tilde{E}$ to be the collection of primal edges which are dual to edges either in the interior of $\mathcal{F}$ or edges in $\tilde{\gamma}_r$ for some $r$. Particularly, $E'$ does \textit{not} include primal edges that are dual to edges in  $\tilde{\gamma}'_r$ so that we have $E'$ being bijective with the set of edges on the closed surface. Recall that we write $(ij,l)$ and $(ij,r)$ the left face and the right face of the edge $ij$ oriented from $i$ to $j$. Thus we have
\[
f^{\dagger}_{ij,l} - f^{\dagger}_{ij,r} = -(f^{\dagger}_{ji,l} - f^{\dagger}_{ji,r}).
\]

Now we can write the change of energy as
\begin{align*}
	\frac{d}{dt} D_c(f^{(t)})|_{t=0}&=  \sum_{ij \in E'} \langle 	f^{\dagger}_{ij,l} - f^{\dagger}_{ij,r} ,f_j \times \dot{f}_j - f_i \times \dot{f}_i \rangle \\
	&= -\sum_{i\in \tilde{V}} \sum_{ ij \in E'| i} \langle 	f^{\dagger}_{ij,l} - f^{\dagger}_{ij,r} , f_i \times \dot{f}_i \rangle
\end{align*}
	\begin{figure}
	\centering
	\includegraphics[width=0.8\textwidth]{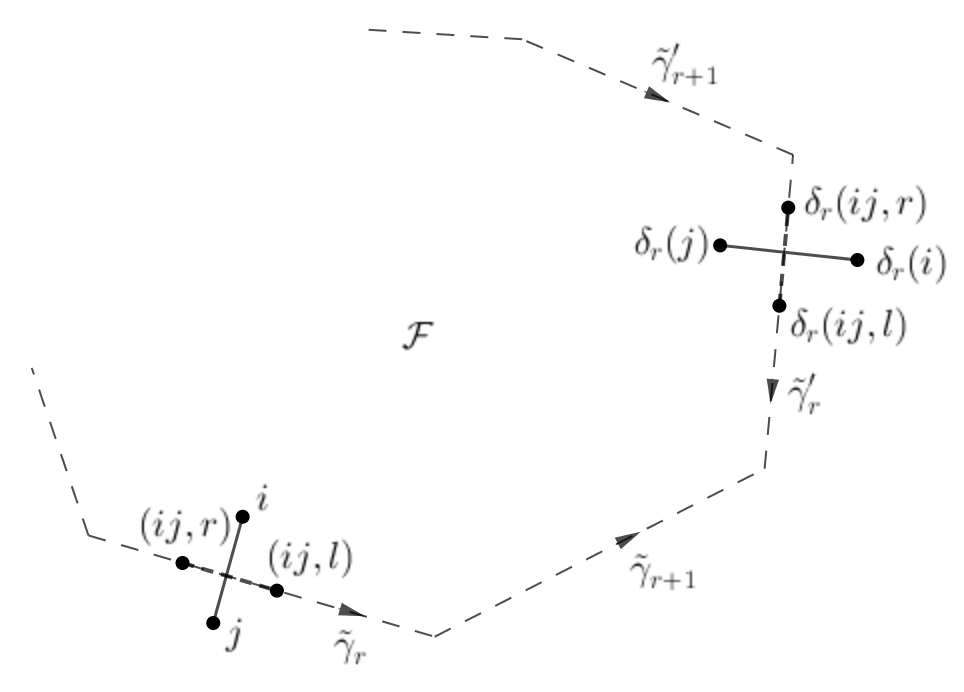}
	\caption{The fundamental domain $\mathcal{F}$ with $*ij\in \tilde{\gamma}_r \subset \partial \mathcal{F}$. Here $*ij$ is the dual edge of $ij$ and it is oriented from $(ij,r)$ to $(ij,l)$. By construction, the edge $ij$ is in $E'$. However, $\delta_r(ij)$ is not in $E'$ since its dual edge is in $\tilde{\gamma}'_r$.}
	\label{fig:fun}
\end{figure}
The summation 
\[
\sum_{ ij \in E'| i} \langle 	f^{\dagger}_{ij,l} - f^{\dagger}_{ij,r} , f_i \times \dot{f}_i \rangle
\]
is over all edges in $E'$ with $i$ as a starting point and the edges are oriented outward. It is equal to zero if $i$ corresponds to a dual face in $\mathcal{F}$ not intersecting any $\tilde{\gamma}_r'$. On the other hand, if $ij\in E'$ intersect $\tilde{\gamma}_r$ form some $r$, then one of the end point, say $j$, lies outside $\mathcal{F}$ (See Figure \ref{fig:fun})). The edge $\delta_r(ij)$ then intersect $\tilde{\gamma}_r'$ with $\delta_r(j)$ lying inside $\mathcal{F}$. We write $*ij$ the dual edge of $ij$ oriented from $(ij,r)$ to $(ij,l)$. With these observations, we have 
\begin{align*}
	&\frac{d}{dt} D_c(f^{(t)})|_{t=0}\\=&  \sum_{r=1}^{2g} \sum_{*ij \in \tilde{\gamma}_r}  \left(\langle 	f^{\dagger}_{ij,l} - f^{\dagger}_{ij,r} ,f_j \times \dot{f}_j \rangle - \langle 	f^{\dagger}_{\delta_r(ij,l)} - f^{\dagger}_{\delta_r(ij,r)}, f_{\delta_r(j)} \times \dot{f}_{\delta_r(j)} \rangle \right) \\
	=& \sum_{r=1}^{2g} \sum_{*ij \in \tilde{\gamma}_r}  \left(\langle 	\rho_{\delta_r}(f^{\dagger}_{ij,l} - f^{\dagger}_{ij,r}) ,\rho_{\delta_r}(f_j \times \dot{f}_j) \rangle - \langle 	f^{\dagger}_{\delta_r(ij,l)} - f^{\dagger}_{\delta_r(ij,r)}, f_{\delta_r(j)} \times \dot{f}_{\delta_r(j)} \rangle \right)\\
	=& \sum_{r=1}^{2g} \sum_{*ij \in \tilde{\gamma}_r}  \left(\langle 	f^{\dagger}_{\delta_r(ij,l)} - f^{\dagger}_{\delta_r(ij,r)} ,\rho_{\delta_r}(f_j \times \dot{f}_j) \rangle - \langle 	f^{\dagger}_{\delta_r(ij,l)} - f^{\dagger}_{\delta_r(ij,r)}, f_{\delta_r(j)} \times \dot{f}_{\delta_r(j)} \rangle \right)\\
	=& \sum_{r=1}^{2g} \sum_{*ij \in \tilde{\gamma}_r}  \langle 	f^{\dagger}_{\delta_r(ij,l)} - f^{\dagger}_{\delta_r(ij,r)} ,- \eta(\sigma_{\delta_r}) - \langle \eta(\sigma_{\delta_r}), f_{\delta_r(j)} \rangle f_{\delta_r(j)}  \rangle \\
	=& \sum_{r=1}^{2g} \sum_{*ij \in \tilde{\gamma}_r}  \langle 	f^{\dagger}_{\delta_r(ij,l)} - f^{\dagger}_{\delta_r(ij,r)} ,- \eta(\sigma_{\delta_r})  \rangle \\
	=& - \sum_{r=1}^{2g} \tr\left( \sigma_{\delta_r}  \cdot \eta^{-1}(\sum_{*ij \in \tilde{\gamma}_r}  	(f^{\dagger}_{\delta_r(ij,l)} - f^{\dagger}_{\delta_r(ij,r)})) \right) 
\end{align*}
where we used the fact that $f^{\dagger}_{\delta_r(ij,l)} - f^{\dagger}_{\delta_r(ij,r)}$ is orthogonal to $f_{\delta_r(j)}$. Observe that the summation 
\[
\eta^{-1}\left(\sum_{*ij \in \tilde{\gamma}_r}  (	f^{\dagger}_{\delta_r(ij,l)} - f^{\dagger}_{\delta_r(ij,r)})\right)
\]
is simply the evaluation of $\tau$ at some element in $\pi_1(S)$. Comparing it with Equation \eqref{eq:wgexplicit}, we deduce that
	\[
\frac{d}{dt} D_c(f^{(t)})|_{t=0} = \omega_G(\sigma,\tau).
\]
	\end{proof}

We are now ready to prove the main Theorem that at the optimal hyperbolic metric, the discrete harmonic map and the edge weight are induced from a weighted Delaunay decomposition as in Definition \ref{def:wdelaunay}.

\begin{proof}[Proof of Theorem \ref{thm:delaunay}]
	Since the Weil-Petersson symplectic form $\omega_G$ is non-degenerate, the optimal hyperbolic metric is reached if and only if $[\tau]$ is trivial, i.e. $\tau \in B^{1}_{\Ad \rho}(\pi_1(S),so(2,1))$ is a coboundary. Recall that $f^{\dagger}$ is defined up to translation. At the optimal hyperbolic metric, there is a unique dual surface $f^{\dagger}$ such that $\tau \equiv0$ and $f^{\dagger}$ becomes $\rho$-equivariant
	\[
	f^{\dagger}_{\gamma(i)}=\rho_{\gamma} (f^{\dagger}_i).
	\] 
	
	We claim that this $f^{\dagger}$ corresponds to a weighted Delaunay decomposition as in Definition \ref{def:wdelaunay}, with vertex weights $\delta:V \to \mathbb{R}_{>0}$ satisfying for every vertex $i\in V$ and any face $\phi \in F$ containing $i$ 
	\[
	\langle f^{\dagger}_{\phi}, f_i \rangle = -\delta_i
	\] 
	In the rest of the proof, we verify the claim.
	
	By the result of \cite{CdV1991}, a discrete harmonic map is an embedding and defines a geodesic cell decomposition of $(S,h)$. Thus the developing map $f$ also defines a geodesic decomposition of the hyperboloid. Consider $i \in \tilde{V}$, the dual face under $f^{\dagger}$ is spacelike with normal $f_i$. Since the edge weights are furthermore positive, every dual face under $f^{\dagger}$ is convex with non-degenerate edges. By considering the Gauss map $f$, we deduce that $f^{\dagger}$ defines a convex polyhedral surface in $\mathbb{R}^{2,1}$. 
	
	Since the polyhedral surface is equivariant with respect to $\rho$, namely for $\gamma \in \pi_1(S)$
	\[
	f^{\dagger}_{\gamma(\phi)}=\rho_{\gamma} (f^{\dagger}_{\phi})
	\]
	we deduce that it is asymptotic to the null cone \[\{ x \in \mathbb{R}^{2,1}| \langle x,x \rangle =0\}.\] The polyhedral surface being spacelike implies that it must lie within the positive light cone \[
	\{ x\in \mathbb{R}^{2,1} | \langle x,x \rangle <0, x_3>0\}.\]
	
	Thus there exists a function $\tilde{\delta}:\tilde{V} \to \mathbb{R}_{>0}$ such that for any $i\in \tilde{V}$ and any face $\phi \in \tilde{F}$ containing $i$ 
	\[
	\langle f^{\dagger}_{\phi}, f_i \rangle = -\tilde{\delta}_i.
	\] 
	The function $\tilde{\delta}$ satisfies
	\[
	\tilde{\delta}_{\gamma(i)}=\tilde{\delta}_{i}
	\]
	for any $\gamma \in \pi_1(S)$ and thus descends to a well-defined vertex weights $\delta:V \to \mathbb{R}_{>0}$. Since $f^{\dagger}$ is convex, we deduce that $f$ is a weighted Delaunay decomposition with vertex weight $\delta$. Furthermore, by the construction of $f^{\dagger}$, we have for $ij \in E$
	\[
	c_{ij}=	\frac{|| f^{\dagger}_{ij,l} - f^{\dagger}_{ij,r}||}{\ell_{ij}} 
	\]
	and hence $c$ is the edge weight induced from the weighted Delaunay decomposition.
\end{proof}

\begin{remark}
A discrete harmonic map $f$ and its associated dual surface $f^{\dagger}$ should be regarded as the hyperbolic analogue of the Maxwell-Cremona correspondence. In the torus case \cite{Lam2022}, a Euclidean metric is optimal for the energy if and only if both $f$ and $f^{\dagger}$ can project to the same torus. In the hyperbolic case, we have a similar statement that a hyperbolic metric is optimal if and only if both $f$ and $f^{\dagger}$ are $\rho$-equivariant and hence project to the same hyperbolic surface. Then we obtain two geodesic decompositions of a closed hyperbolic surface such that combinatorially they are dual to each other and the corresponding edges are orthogonal to each other.
\end{remark}

\section{Variants of Dirichlet energy}\label{sec:genen}

Similar conclusion can be made generally for Dirichlet energy in the form
\begin{equation}
	\label{eq:genergy}
		\tilde{D}(f) := \sum_{ij } w_{ij}(\ell_{ij})
\end{equation}
where for each $ij\in E$, $w_{ij}:\mathbb{R} \to \mathbb{R}$ is a smooth increasing function of hyperbolic edge lengths $\ell_{ij}$. It would be interesting to explore whether in this general form, there exists a unique discrete harmonic map to a fixed hyperbolic surface and the Dirichlet energy of discrete harmonic maps has a unique minimizer over the Teichm\"{u}ller space. Anyhow with the same techniques, one can show that a discrete harmonic map at an optimal hyperbolic metric yields a weighted Delaunay decomposition.

In terms of the developing map into the hyperboloid with holonomy $\rho$, we have
\[
\tilde{D}({f}) = \sum_{ij} w_{ij}(\cosh^{-1}(-\langle f_i,f_j \rangle)
\]
where the summation is over all edges in a fundamental domain. The realization is discrete harmonic in a fixed hyperbolic surface if for every vertex $i \in V$
\[
\sum_{j} w'_{ij}(\ell_{ij}) \frac{f_i \times f_j}{||f_i \times f_j||} =0.
\]
If an optimal hyperbolic metric is reached, the discrete harmonic map induces a realization of the dual graph $f^{\dagger}:(V^*,E^*) \to \mathbb{R}^{2,1}$ such that 
\begin{align} \label{eq:fdagw}
	f^{\dagger}_{ij,l} - f^{\dagger}_{ij,r}= w'_{ij}(\ell_{ij}) \frac{f_i \times f_j}{||f_i \times f_j||}
\end{align}
and $f^{\dagger}$ is equivariant with respect to $\rho$ in the sense that $f^{\dagger} \circ \gamma = \rho_{\gamma} \circ f^{\dagger}$ for $\gamma \in \pi_1(S)$. The map $f^{\dagger}$ induces a weighted Delaunay decomposition on the hyperbolic surface with \[ ||f^{\dagger}_{ij,l} - f^{\dagger}_{ij,r}|| = w'_{ij}(\ell_{ij}).\]

In the previous sections, we have explored the choice $w_{ij}(x) = \frac{1}{2}c_{ij} x^2$. In fact, there is another interesting choice for the function $w_{ij}$, which is compatible with discrete differential geometry.
\begin{example}
	We consider the Dirichlet energy in the form of Equation \eqref{eq:genergy} with
	\[
	w_{ij}(x) = 2 c_{ij} \sinh^2 \frac{x}{2}
	\]
	for some positive edge weights $c:E \to \mathbb{R}_{>0}$. At an optimal hyperbolic metric, the discrete harmonic map $f$ induces a realization of the dual graph $f^{\dagger}:(V^*,E^*) \to \mathbb{R}^{2,1}$ such that by Equation \eqref{eq:fdagw},
	\[
	c_{ij} = \frac{||f^{\dagger}_{ij,l} - f^{\dagger}_{ij,r}||}{\sinh \ell_{ij}}= \frac{||f^{\dagger}_{ij,l} - f^{\dagger}_{ij,r}||}{||f_i \times f_j||}.
	\]
	In case $f^{\dagger}$ yields a Delaunay decomposition (i.e. trivial weights at vertices), then the weight $c$ coincides exactly with the edge weights in the discrete hyperbolic Laplacian \cite{Lam2024}, i.e.
	\[
	  c_{ij}=\frac{\cot( \frac{\pi - \alpha_{jk}^i - \alpha_{ki}^j + \alpha_{ij}^k}{2}) + \cot( \frac{\pi - \alpha_{lj}^i - \alpha_{il}^j + \alpha_{ji}^l}{2})}{2  \cosh^2 \frac{\ell_{ij}}{2}}
	\]
	which is related to deformation of circle patterns. Furthermore, the energy of the discrete harmonic map $f$ becomes
    \begin{align*}
    	\tilde{D}({f})=  	\sum_{ij} ||f^{\dagger}_{ij,l} - f^{\dagger}_{ij,r}|| \tanh \frac{\lambda_{ij}}{2}
    	\end{align*}
    which is the integrated mean curvature of the space-like polyhedra surface $f^{\dagger}$ (See \cite{LamYashi2017}). There is also a similar interpretation for the choice $w_{ij}(x) = \frac{1}{2}c_{ij} x^2$ (See \cite{Roman2023}).
\end{example}
	
	\section*{Acknowledgment}
	
	The author would like to thank Toru Kajigaya for fruitful discussions.

	\bibliographystyle{amsplain}
\bibliography{torus}

\end{document}